\author{Dilip Raghavan}
\thanks{First author was partially supported by the Singapore Ministry of Education's research grant number MOE2017-T2-2-125.}
\address{Department of Mathematics \\
National University of Singapore\\
Singapore 119076}
\email{dilip.raghavan@protonmail.com}
\urladdr{http://www.math.nus.edu.sg/$\sim$raghavan}
\author{Saharon Shelah}
\thanks{Both authors were partially supported by European Research Council grant 338821.
Publication 1160 on Shelah's list.}
\date{\today}
\subjclass[2010]{03E17, 03E35, 03E05, 03E55}
\keywords{cardinal invariant, uniform ultrafilter, indecomposable ultrafilter, large cardinal}
\title[Small ultrafilter number]{A small ultrafilter number at smaller cardinals}
\def\polhk#1{\setbox0=\hbox{#1}{\ooalign{\hidewidth
    \lower1.5ex\hbox{`}\hidewidth\crcr\unhbox0}}}
\newtheorem{Theorem}{Theorem}
\newtheorem{Claim}[Theorem]{Claim}
\newtheorem{Lemma}[Theorem]{Lemma}
\newtheorem{Cor}[Theorem]{Corollary}
\newtheorem{Question}[Theorem]{Question}
\theoremstyle{definition}
\newtheorem{Def}[Theorem]{Definition}
\theoremstyle{remark}
\newcommand{\forces}{\Vdash}
\newcommand{\dd}{{\mathfrak{d}}}
\newcommand{\rr}{{\mathfrak{r}}}
\newcommand{\inva}{{\mathfrak{a}}}
\newcommand{\uu}{{\mathfrak{u}}}
\renewcommand{\[}{\left[}
\renewcommand{\]}{\right]}
\newcommand{\PP}{\mathbb{P}}
\newcommand{\QQ}{\mathbb{Q}}
\newcommand{\lc}{\left|}
\newcommand{\rc}{\right|}
\newcommand\ZFC{\mathrm{ZFC}}
\newcommand\GCH{\mathrm{GCH}}
\newcommand\Fn{\mathrm{Fn}}
\DeclareMathOperator{\cf}{cf}
\newcommand{\DDD}{\mathcal{D}}
\newcommand{\UUU}{{\mathcal{U}}}
\newcommand{\FFF}{{\mathcal{F}}}
\newcommand{\V}{{\mathbf{V}}}
\newcommand{\VG}{{{\mathbf{V}}[G]}}
\newcommand{\VP}{{\mathbf{V}}^{\PP}}
\newcommand{\cs}{{\subseteq}_{\mathrm{c}}}
\newcommand{\RR}{\mathbb{R}}
\newcommand{\XXX}{\mathcal{X}}
\newcommand{\pr}[2]{\langle #1, #2 \rangle}
\newcommand{\seq}[4]{\langle {#1}_{#2}: #2 #3 #4 \rangle}
\begin{document}
\begin{abstract}
 It is proved to be consistent relative to a measurable cardinal that there is a uniform ultrafilter on the real numbers which is generated by fewer than the maximum possible number of sets.
 It is also shown to be consistent relative to a supercompact cardinal that there is a uniform ultrafilter on ${\aleph}_{\omega+1}$ which is generated by fewer than ${2}^{{\aleph}_{\omega+1}}$ sets.
\end{abstract}
\maketitle
\section{Introduction} \label{sec:intro}
The purpose of this short note is to show that it is possible to make the ultrafilter number small at relatively small accessible regular cardinals assuming the existence of large cardinals.
Recall the following definitions.
\begin{Def} \label{def:uniform}
 Let $\kappa \geq \omega$ be a regular cardinal.
 An ultrafilter $\UUU$ on $\kappa$ is said to be \emph{uniform} if $\lc A \rc = \kappa$ for every $A \in \UUU$.
 A set $\XXX \subseteq \UUU$ \emph{generates} $\UUU$ if
 \begin{align*}
  \UUU = \left\{ A \subseteq \kappa: \exists B \in \XXX \[B \subseteq A\] \right\}.
 \end{align*}
 The cardinal $\uu(\kappa)$ is defined to be smallest size of a family that generates a uniform ultrafilter on $\kappa$.
 More formally,
 \begin{align*}
  \uu(\kappa) = \min\left\{ \lc \XXX \rc: \XXX \ \text{generates some uniform ultrafilter on} \ \kappa \right\}.
 \end{align*} 
\end{Def}
Much is known about $\uu(\omega)$.
The consistency of $\uu(\omega) = {\aleph}_{1} < {2}^{{\aleph}_{0}}$ seems to have been first noted by Kunen in the early 1970s (see Exercise (A10) of Chapter VIII in \cite{Kunen}).
To obtain Kunen's model, one starts with ${\aleph}_{1} < {2}^{{\aleph}_{0}}$ and then adjoins an ultrafilter witnessing $\uu(\omega) = {\aleph}_{1}$ by a finite support iteration of c.c.c.\@ forcings of length ${\omega}_{1}$.
Baumgartner and Laver (see \cite{iteratedsacks}) noticed later that both countable support iterations and countable support products of Sacks forcing preserve P-points, and hence they showed that $\uu(\omega) = {\aleph}_{1} < {\aleph}_{2} = {2}^{{\aleph}_{0}}$ holds in both the iterated and side-by-side Sacks models.
The Miller model (see \cite{millerforcing} and \cite{BJ}) provides an example of a model where $\uu(\omega) = {\aleph}_{1} < {\aleph}_{2} = \dd(\omega)$ holds.
Much later, Shelah proved the consistency of $\uu(\omega) < \inva(\omega)$ assuming the consistency of a measurable cardinal in \cite{sh700}.

The situation above $\omega$ is much less clear.
Kunen asked in the seventies whether $\uu({\aleph}_{1}) < {2}^{{\aleph}_{1}}$ or even whether $\rr({\aleph}_{1}) < {2}^{{\aleph}_{1}}$ is consistent.
Kunen's questions remain completely open.
If $\kappa \geq {\beth}_{\omega}$ is regular, then $\dd(\kappa) \leq \rr(\kappa)$ (see \cite{abrd}), and therefore $\dd(\kappa) \leq \rr(\kappa) \leq \uu(\kappa)$.
Hence there can be no perfect analogue of the Miller model at sufficiently large regular cardinals.
It remains an open problem whether $\dd(\kappa) \leq \rr(\kappa)$ is provable for uncountable regular cardinals $\kappa$ less than ${\beth}_{\omega}$. 
For regular $\kappa > \omega$, the consistency of $\uu(\kappa) < {2}^{\kappa}$ was only known for supercompact $\kappa$ until now.
An unpublished result of Carlson from the eighties showed that if $\kappa$ is a Laver indestructible supercompact cardinal, then there is a forcing extension in which $\kappa$ remains supercompact and $\uu(\kappa) = {\kappa}^{+} < {2}^{\kappa}$.
Carlson's model is obtained in a manner analogous to how Kunen's model for $\uu(\omega) = {\aleph}_{1} < {2}^{{\aleph}_{0}}$ is obtained.

In this note, we produce models where $\uu(\kappa) < {2}^{\kappa}$ for accessible values of $\kappa$.
More precisely, assuming a measurable cardinal in the ground model, we produce a model where ${2}^{{\aleph}_{0}}$ is regular and $\uu({2}^{{\aleph}_{0}}) < {2}^{{2}^{{\aleph}_{0}}}$, and assuming a supercompact cardinal in the ground model we produce a model where $\uu({\aleph}_{\omega+1}) < {2}^{{\aleph}_{\omega+1}}$.
We do not know if any large cardinals are necessary to produced models satisfying these statements.
Our models are unlikely to be optimal in several other ways.
For instance in all of our models, ${2}^{\kappa}$ is much larger than ${\kappa}^{+}$.
At present, we do not know how to produce models of $\uu(\kappa) < {2}^{\kappa}$ for accessible values of $\kappa$ where the gap between ${\kappa}^{+}$ and ${2}^{\kappa}$ is small.
See Section \ref{sec:q} for further discussion of open problems.

Most of the ideas needed to prove our theorems come from a paper of Shelah and Thomas~\cite{sh:304} in which several statements about subgroups of the symmetric group on $\kappa$ were shown to be consistent relative to large cardinals.
In fact we show that $\uu({\aleph}_{\omega+1}) < {2}^{{\aleph}_{\omega+1}}$ holds in the model constructed in Section 4 of \cite{sh:304}.
A crucial ingredient used in the proofs in our paper and in the paper of Shelah and Thomas~\cite{sh:304} is the notion of an indecomposable filter.
In particular we will use a theorem of Ben-David and Magidor~\cite{indecomposable} saying that indecomposable filters may exist on ${\aleph}_{\omega+1}$.

While we will only consider $\uu(\kappa)$ for regular $\kappa$ in this note, several other works such as Garti and Shelah~\cite{singularu} have dealt with the ultrafilter number at singular cardinals.      
\section{A general result} \label{sec:general}
In this section we will present a general theorem saying that if $\mu$ is a singular strong limit cardinal, if $\lambda$ and $\kappa$ are specifically chosen cardinals below $\mu$, and if $\PP$ is any forcing notion that satisfies a combinatorial condition relative to $\lambda, \kappa$, and $\mu$, then $\PP$ forces that $\uu(\kappa) \leq \mu$.
We will apply this general result in Sections \ref{sec:uc} and \ref{sec:alephomega+1} to obtain consistency results. 
\begin{Def} \label{def:cs}
 Let $\pr{\PP}{{\leq}_{\PP}, {\mathbbm{1}}_{\PP}}$ and $\pr{\QQ}{{\leq}_{\QQ}, {\mathbbm{1}}_{\QQ}}$ be notions of forcing.
 We will write $\pr{\PP}{{\leq}_{\PP}, {\mathbbm{1}}_{\PP}} \: \cs \: \pr{\QQ}{{\leq}_{\QQ}, {\mathbbm{1}}_{\QQ}}$ if the following conditions are satisfied:
 \begin{enumerate}
  \item
  ${\mathbbm{1}}_{\PP} = {\mathbbm{1}}_{\QQ}$;
  \item
  $\PP \subseteq \QQ$;
  \item
  ${\leq}_{\PP} = {\leq}_{\QQ} \cap \left( \PP \times \PP \right)$;
  \item
  for any $p, p' \in \PP$, $p \: {\perp}_{\PP} \: p' \iff p \: {\perp}_{\QQ} \: p'$;
  \item
  if $A \subseteq \PP$ is any maximal antichain in $\pr{\PP}{{\leq}_{\PP}, {\mathbbm{1}}_{\PP}}$, then $A$ remains a maximal antichain in $\pr{\QQ}{{\leq}_{\QQ}, {\mathbbm{1}}_{\QQ}}$. 
 \end{enumerate}
The relation $\pr{\PP}{{\leq}_{\PP}, {\mathbbm{1}}_{\PP}} \: \cs \: \pr{\QQ}{{\leq}_{\QQ}, {\mathbbm{1}}_{\QQ}}$ is usually expressed by saying that $\pr{\PP}{{\leq}_{\PP}, {\mathbbm{1}}_{\PP}}$ is a \emph{complete suborder of} $\pr{\QQ}{{\leq}_{\QQ}, {\mathbbm{1}}_{\QQ}}$.
We also usually abuse notation and simply write $\PP \: \cs \: \QQ$ or say that $\PP$ is a complete suborder of $\QQ$. 
\end{Def}
It is clear that $\cs$ is a transitive relation.
The following simple fact will be useful.
\begin{Lemma} \label{lem:product}
 Let $\pr{\PP}{{\leq}_{\PP}, {\mathbbm{1}}_{\PP}}$, $\pr{\QQ}{{\leq}_{\QQ}, {\mathbbm{1}}_{\QQ}}$, and $\pr{\RR}{{\leq}_{\RR}, {\mathbbm{1}}_{\RR}}$ be any forcing notions.
 If $\PP \: \cs \: \QQ$, then $\PP \times \RR \: \cs \: \QQ \times \RR$.
\end{Lemma}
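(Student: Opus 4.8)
The plan is to verify the five clauses of Definition~\ref{def:cs} for the pair $\PP \times \RR$ and $\QQ \times \RR$, keeping in mind that in a product forcing the order is coordinatewise, the top element is the pair of top elements, and $\pr{p}{r} \comp \pr{p'}{r'}$ holds exactly when $p \comp p'$ and $r \comp r'$. With these facts, clauses (1)--(4) are routine. Clause (1) holds because ${\mathbbm{1}}_{\PP} = {\mathbbm{1}}_{\QQ}$ gives $\pr{{\mathbbm{1}}_{\PP}}{{\mathbbm{1}}_{\RR}} = \pr{{\mathbbm{1}}_{\QQ}}{{\mathbbm{1}}_{\RR}}$; clause (2) is immediate from $\PP \subseteq \QQ$; clause (3) follows by applying clause (3) for $\PP, \QQ$ in the first coordinate and the identity in the second; and clause (4) follows from the product criterion for compatibility together with clause (4) for $\PP, \QQ$, the $\RR$-coordinate contributing the same compatibility relation on both sides.

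The only real content is clause (5), and the main obstacle is to prove it; I would do so via the notion of a \emph{reduction}. Call $p \in \PP$ a reduction of $q \in \QQ$ if every $p' \leq_{\PP} p$ satisfies $p' \comp_{\QQ} q$. First I would show that $\PP \cs \QQ$ forces every $q \in \QQ$ to have a reduction in $\PP$: if some $q$ had none, then $D = \{p \in \PP : p \perp_{\QQ} q\}$ would be dense in $\PP$, so one could pick a maximal antichain $A \subseteq D$ of $\PP$; by clause (5) this $A$ remains maximal in $\QQ$, yet $q$ is incompatible in $\QQ$ with every member of $A$ and is distinct from each of them, contradicting maximality.

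Next I would lift reductions to the product: if $p$ is a reduction of $q$, then $\pr{p}{r}$ is a reduction of $\pr{q}{r}$ relative to $\PP \times \RR \subseteq \QQ \times \RR$, since any $\pr{p'}{r'} \leq \pr{p}{r}$ has $p' \leq_{\PP} p$ and $r' \leq_{\RR} r$, whence $p' \comp_{\QQ} q$ and trivially $r' \comp_{\RR} r$, and common extensions in the two coordinates assemble into a common extension of $\pr{p'}{r'}$ and $\pr{q}{r}$. Finally, a density argument converts the existence of reductions back into clause (5): given a maximal antichain $A$ of $\PP \times \RR$, it is an antichain of $\QQ \times \RR$ by clause (4), and for an arbitrary $\pr{q}{r}$ I take its reduction $\pr{p}{r}$, find a member of $A$ compatible with $\pr{p}{r}$ in $\PP \times \RR$ by maximality, and transfer this through the reduction property to compatibility with $\pr{q}{r}$ in $\QQ \times \RR$. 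Once the reduction characterization of clause (5) is isolated, the coordinatewise nature of the product makes the lifting purely mechanical, the $\RR$-coordinate being its own reduction throughout.
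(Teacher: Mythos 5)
Your proof is correct and follows essentially the same route as the paper: clauses (1)--(4) are routine, and clause (5) is handled by taking a reduction $p \in \PP$ of $q \in \QQ$ and observing that $\pr{p}{r}$ is then a reduction of $\pr{q}{r}$ in $\PP \times \RR$. The paper simply takes the equivalence between clause (5) and the existence of reductions for granted, whereas you spell out both directions; the substance is identical.
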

\begin{proof}
 Points (1)--(4) of Definition \ref{def:cs} are clear.
 For (5), consider any $\pr{q}{r} \in \QQ \times \RR$.
 Since $\PP \: \cs \: \QQ$, there exists $p \in \PP$ with the property that $\forall p' \: {\leq}_{\PP} \: p\[p' \: {\not\perp}_{\QQ} \: q \]$.
 Now $\pr{p}{r} \in \PP \times \RR$.
 Moreover if $\pr{p'}{r'}$ is any condition such that $\pr{p'}{r'} \: {\leq}_{\PP \times \RR} \: \pr{p}{r}$, then $\pr{p'}{r'} \: {\not\perp}_{\QQ \times \RR} \: \pr{q}{r}$.
 This implies (5).
\end{proof}
The notion of an indecomposable filter was first considered by Prikry~\cite{prikrythesispaper} and investigated by many other afterwards.
Ben-David and Magidor~\cite{indecomposable} showed that it is consistent relative to a supercompact cardinal that uniform ${\aleph}_{n}$-indecomposable ultrafilters can exist on ${\aleph}_{\omega+1}$ for $0 < n < \omega$.
This is the key combinatorial notion needed for our proofs.
\begin{Def} \label{def:indec}
 Let $\kappa$ and $\lambda$ be infinite cardinals.
 A filter $\FFF$ on $\lambda$ is said to be \emph{$\kappa$-indecomposable} if whenever $\seq{Y}{\xi}{<}{\kappa}$ is a partition of $\lambda$ (i.e.\@ $\lambda = {\bigcup}_{\xi < \kappa}{{Y}_{\xi}}$ and $\forall \zeta < \xi < \kappa\[{Y}_{\zeta} \cap {Y}_{\xi} = 0\]$), then there exists $T \subseteq \kappa$ such that $\lc T \rc < \kappa$ and ${\bigcup}_{\xi \in T}{{Y}_{\xi}} \in \FFF$.
\end{Def}
We next introduce a technical combinatorial condition on a forcing notion $\PP$ involving several other parameters.
In Section \ref{sec:uc} it will be proved that forcing notions of the form $\Fn(I, J, \lambda)$ and products of forcing notions of this form satisfy this combinatorial condition for a suitable choice of the other parameters.
In this section, we will prove that if $\PP$ satisfies the combinatorial condition for some choice of the other parameters, then $\PP$ forces that the ultrafilter number at one of these parameters is bounded by another parameter. 
\begin{Def} \label{def:filtration}
 Let $\pr{\PP}{{\leq}_{\PP}, {\mathbbm{1}}_{\PP}}$ be a forcing notion.
 We say that $\pr{\PP}{{\leq}_{\PP}, {\mathbbm{1}}_{\PP}}$ has a \emph{$(\lambda, \kappa, \mu, \DDD)$-filtration} if there exists a sequence $\langle {\PP}_{\alpha}: \alpha < \mu \rangle$ satisfying the following:
 \begin{enumerate}
  \item
  $\lambda, \kappa$, and $\mu$ are infinite cardinals satisfying $\lambda < \cf(\mu) < \kappa < \mu$;
  \item
  $\mu$ is a strong limit cardinal and ${\lambda}^{< \lambda} = \lambda$;
  \item
  $\DDD$ is a uniform $\cf(\mu)$-indecomposable filter on $\kappa$;
  \item
  $\PP$ is ${\lambda}^{+}$-c.c.\@ and $\forall p \in \PP \exists \alpha < \mu\[p \in {\PP}_{\alpha}\]$;
  \item
  for each $\alpha < \mu$, ${\PP}_{\alpha}\: \cs \: \PP$, and $\forall \xi < \alpha\[{\PP}_{\xi} \subseteq {\PP}_{\alpha}\]$;
  \item
  for each $\alpha < \mu$, $\lc {\PP}_{\alpha} \rc < \mu$.
 \end{enumerate} 
\end{Def}
Observe that there is no connection between $\PP$ and the filter $\DDD$.
In other words, we only need the existence of some uniform $\cf(\mu)$-indecomposable filter on $\kappa$.
Conditions (4)--(6) simply say that $\PP$ is a ${\lambda}^{+}$-c.c.\@ poset which can be written as an increasing union of small complete subposets.
Actually the condition that ${\PP}_{\alpha}$ is a complete sub order of $\PP$ for each $\alpha < \mu$ is not necessary for the proof of our main results.
It is sufficient if each ${\PP}_{\alpha}$ is any sub order of $\PP$.
However Condition (5) is automatically satisfied in all of our applications.
Hence we have not sought to weaken it.  
\begin{Lemma} \label{lem:productfiltration}
 Let $\pr{\PP}{{\leq}_{\PP}, {\mathbbm{1}}_{\PP}}$ and $\pr{\RR}{{\leq}_{\RR}, {\mathbbm{1}}_{\RR}}$ be forcing notions.
 Assume that $\lambda, \kappa, \mu$, and $\DDD$ are so that $\pr{\PP}{{\leq}_{\PP}, {\mathbbm{1}}_{\PP}}$ has a $(\lambda, \kappa, \mu, \DDD)$-filtration.
 If $\lc \RR \rc < \mu$ and
 \begin{align*}
  {\forces}_{\PP}{\text{``} \: \check{\RR} \ \text{is} \ {\check{\lambda}}^{+}\text{-c.c.\@''}},
 \end{align*} 
 then $\PP \times \RR$ also has a $(\lambda, \kappa, \mu, \DDD)$-filtration.  
\end{Lemma}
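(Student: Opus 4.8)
The plan is to define the required filtration of $\PP \times \RR$ by setting ${(\PP \times \RR)}_{\alpha} = {\PP}_{\alpha} \times \RR$ for each $\alpha < \mu$, where $\langle {\PP}_{\alpha} : \alpha < \mu \rangle$ is the given $(\lambda, \kappa, \mu, \DDD)$-filtration of $\PP$, and then to verify the six clauses of Definition \ref{def:filtration} one by one, keeping the same parameters $\lambda, \kappa, \mu, \DDD$. Clauses (1)--(3) speak only about $\lambda, \kappa, \mu$, and $\DDD$, which are unchanged, so they hold automatically. The second half of (4) is immediate: given $\pr{p}{r} \in \PP \times \RR$, choose $\alpha < \mu$ with $p \in {\PP}_{\alpha}$, so that $\pr{p}{r} \in {\PP}_{\alpha} \times \RR$. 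For (5), I would invoke Lemma \ref{lem:product}: since ${\PP}_{\alpha} \: \cs \: \PP$, that lemma yields ${\PP}_{\alpha} \times \RR \: \cs \: \PP \times \RR$, while the inclusions ${\PP}_{\xi} \times \RR \subseteq {\PP}_{\alpha} \times \RR$ for $\xi < \alpha$ follow at once from ${\PP}_{\xi} \subseteq {\PP}_{\alpha}$. For (6), note $\lc {\PP}_{\alpha} \times \RR \rc = \lc {\PP}_{\alpha} \rc \cdot \lc \RR \rc$; both factors are below the strong limit (hence limit) cardinal $\mu$, so their product is as well.

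The one clause requiring genuine work, and the main obstacle, is the chain condition in the first half of (4), namely that $\PP \times \RR$ is ${\lambda}^{+}$-c.c.\ This is an instance of the standard fact that the product of a $\theta$-c.c.\ poset with a poset it forces to be $\theta$-c.c.\ is again $\theta$-c.c., specialized to the regular cardinal $\theta = {\lambda}^{+}$. I would argue by contradiction: suppose $\left\{ \pr{p_{\xi}}{r_{\xi}} : \xi < {\lambda}^{+} \right\}$ is an antichain of size ${\lambda}^{+}$. Let $\dot I$ be a $\PP$-name for $\left\{ \xi < {\lambda}^{+} : p_{\xi} \in \mathring{G} \right\}$. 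If $G$ is $\PP$-generic and $\xi, \eta$ are distinct elements of the realization $I_{G}$, then $p_{\xi}$ and $p_{\eta}$ both lie in $G$ and are therefore compatible in $\PP$, so the incompatibility of $\pr{p_{\xi}}{r_{\xi}}$ and $\pr{p_{\eta}}{r_{\eta}}$ in the product forces ${r}_{\xi} \: {\perp}_{\RR} \: {r}_{\eta}$; hence $\left\{ r_{\xi} : \xi \in I_{G} \right\}$ is an antichain in $\RR$, and the hypothesis that ${\forces}_{\PP} \text{``}\check{\RR}$ is ${\check{\lambda}}^{+}$-c.c.'' gives $\lc I_{G} \rc \leq \lambda$. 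Thus ${\forces}_{\PP} \lc \dot I \rc \leq \check{\lambda}$.

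To finish, I would exploit the regularity of ${\lambda}^{+}$ together with the fact that $\PP$ is ${\lambda}^{+}$-c.c.\ Since every realization of $\dot I$ has size at most $\lambda$, it is bounded in ${\lambda}^{+}$, so ${\forces}_{\PP} \sup \dot I < {\lambda}^{+}$. The conditions forcing $\dot I \subseteq \check{\beta}$ for some $\beta < {\lambda}^{+}$ are dense, so I may fix a maximal antichain $D$ of such conditions; by ${\lambda}^{+}$-c.c.\ of $\PP$ we have $\lc D \rc \leq \lambda$, and letting $\beta^{*}$ be the supremum of the corresponding bounds, regularity of ${\lambda}^{+}$ gives $\beta^{*} < {\lambda}^{+}$ and ${\forces}_{\PP} \dot I \subseteq \check{\beta^{*}}$. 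But for any $\xi$ with $\beta^{*} \leq \xi < {\lambda}^{+}$ we have $p_{\xi} \: {\forces}_{\PP} \: \check{\xi} \in \dot I$, which is incompatible with $p_{\xi} \: {\forces}_{\PP} \: \dot I \subseteq \check{\beta^{*}}$. This contradiction shows no antichain of size ${\lambda}^{+}$ can exist, establishing the only nonroutine step; everything else is bookkeeping against Definition \ref{def:filtration}.
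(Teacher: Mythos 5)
Your proof is correct and follows essentially the same route as the paper: define the filtration of $\PP \times \RR$ by ${\PP}_{\alpha} \times \RR$ and check the clauses of Definition \ref{def:filtration}, with the only nontrivial point being the ${\lambda}^{+}$-c.c.\ of the product. The sole difference is that the paper simply cites the standard product chain-condition fact (Lemma 5.7 of \cite{Kunen}), whereas you prove it in full; your argument for it is the standard one and is correct.
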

\begin{proof}
 Fix $\seq{\PP}{\alpha}{<}{\mu}$ witnessing that  $\pr{\PP}{{\leq}_{\PP}, {\mathbbm{1}}_{\PP}}$ has a $(\lambda, \kappa, \mu, \DDD)$-filtration.
 For each $\alpha < \mu$, let ${\QQ}_{\alpha} = {\PP}_{\alpha} \times \RR$.
 We check that $\seq{\QQ}{\alpha}{<}{\mu}$ is a witness that $\PP \times \RR$ has a $(\lambda, \kappa, \mu, \DDD)$-filtration.
 Indeed clauses (1)--(3) of Definition \ref{def:filtration} only depend on $\lambda, \kappa, \mu$, and $\DDD$, which satisfy these clauses by hypothesis.
 Also, by hypothesis $\PP$ is ${\lambda}^{+}$-c.c.\@ and ${\forces}_{\PP}{\text{``} \: \check{\RR} \ \text{is} \ {\check{\lambda}}^{+}\text{-c.c.\@''}}$.
 It is a standard fact (e.g.\@ Lemma 5.7 of \cite{Kunen}) that this implies that $\PP \times \RR$ is ${\lambda}^{+}$-c.c.\@
 Also if $\pr{p}{r} \in \PP \times \RR$, then there is $\alpha < \mu$ with $p \in {\PP}_{\alpha}$, whence $\pr{p}{r} \in {\PP}_{\alpha} \times \RR = {\QQ}_{\alpha}$.
 This verifies (4) of Definition \ref{def:filtration}.
 Next for each $\alpha < \mu$, since ${\PP}_{\alpha} \: \cs \: \PP$, ${\QQ}_{\alpha} = {\PP}_{\alpha} \times \RR \: \cs \: \PP \times \RR$.
 Also for each $\xi < \alpha$, we have ${\QQ}_{\xi} = {\PP}_{\xi} \times \RR \subseteq {\PP}_{\alpha} \times \RR = {\QQ}_{\alpha}$ because ${\PP}_{\xi} \subseteq {\PP}_{\alpha}$.
 Finally, for each $\alpha < \mu$, $\lc {\QQ}_{\alpha} \rc = \lc {\PP}_{\alpha} \times \RR \rc < \mu$.
 This concludes the verification.
\end{proof}
The next theorem shows that $\PP$ forces $\uu(\kappa)$ to be bounded by $\mu$ whenever $\PP$ has a $(\lambda, \kappa, \mu, \DDD)$-filtration.
The proof is similar to the proof of Theorem 6 from \cite{sh:304}, though our theorem below is formulated in a slightly more general context.
\begin{Theorem} \label{thm:general}
 Let $\pr{\PP}{{\leq}_{\PP}, {\mathbbm{1}}_{\PP}}$ be a forcing notion.
 Assume that $\lambda, \kappa, \mu$, and $\DDD$ are so that $\pr{\PP}{{\leq}_{\PP}, {\mathbbm{1}}_{\PP}}$ has a $(\lambda, \kappa, \mu, \DDD)$-filtration.
 Assume moreover that $\cf(\kappa) = \kappa$.
 Then $\PP$ forces that every uniform ultrafilter on $\kappa$ that extends $\DDD$ is generated by a set of size at most $\mu$.
 In particular, $\PP$ forces that $\uu(\kappa) \leq \mu$. 
\end{Theorem}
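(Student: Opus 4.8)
The plan is to pass to the generic extension $V[G]$ by a $\PP$-generic filter $G$, fix a sequence $\seq{\PP}{\alpha}{<}{\mu}$ witnessing the $(\lambda, \kappa, \mu, \DDD)$-filtration, write ${G}_{\alpha} = G \cap {\PP}_{\alpha}$ for the induced ${\PP}_{\alpha}$-generic (which is generic because ${\PP}_{\alpha} \: \cs \: \PP$), and exhibit the explicit generating family
\begin{align*}
 \XXX = \bigcup_{\alpha < \mu}\left( \PPP(\kappa) \cap V[{G}_{\alpha}] \right)
\end{align*}
consisting of all subsets of $\kappa$ that already appear in one of the intermediate extensions. First I would bound $\lc \XXX \rc$. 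Since $\PP$ is ${\lambda}^{+}$-c.c., a subset of $\kappa$ lying in $V[{G}_{\alpha}]$ has a nice ${\PP}_{\alpha}$-name built from $\kappa$ antichains of ${\PP}_{\alpha}$, each of size at most $\lambda$. As $\lc {\PP}_{\alpha} \rc, \lambda, \kappa$ are all below the strong limit $\mu$, the number of such names is at most ${\lc {\PP}_{\alpha} \rc}^{\lambda \cdot \kappa} < \mu$; hence each $\PPP(\kappa) \cap V[{G}_{\alpha}]$ has size less than $\mu$ and $\lc \XXX \rc \leq \mu$. The real content is then the claim that $\XXX \cap \UUU$ generates $\UUU$ for every uniform ultrafilter $\UUU$ on $\kappa$ extending $\DDD$.

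Verifying this generating property is the heart of the argument and the step I expect to be the main obstacle, since it is exactly where the ${\cf(\mu)}$-indecomposability of $\DDD$ enters. Given $A \in \UUU$, I would fix in $V$ a nice name $\dot{A} = \left\{ \pr{\check{\xi}}{p} : \xi < \kappa \wedge p \in {A}_{\xi} \right\}$, where each ${A}_{\xi}$ is an antichain of size at most $\lambda$ of conditions forcing $\check{\xi} \in \dot{A}$, so that $\xi \in A$ iff $G$ meets ${A}_{\xi}$. For $p \in \PP$ let $\beta(p) < \mu$ be least with $p \in {\PP}_{\beta(p)}$, and put $\gamma(\xi) = \sup\left\{ \beta(p) : p \in {A}_{\xi} \right\}$; as $\lc {A}_{\xi} \rc \leq \lambda < \cf(\mu)$, this supremum satisfies $\gamma(\xi) < \mu$. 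Fixing an increasing cofinal sequence $\seq{\mu}{i}{<}{\cf(\mu)}$ in $\mu$ with ${\mu}_{0} = 0$ and setting ${Y}_{i} = \left\{ \xi < \kappa : {\mu}_{i} \leq \gamma(\xi) < {\mu}_{i+1} \right\}$ produces a partition $\seq{Y}{i}{<}{\cf(\mu)}$ of $\kappa$ lying in $V$. Applying ${\cf(\mu)}$-indecomposability of $\DDD$ in $V$ yields $T$ with $\lc T \rc < \cf(\mu)$ and $B = {\bigcup}_{i \in T}{Y}_{i} \in \DDD \subseteq \UUU$; then $\delta = {\sup}_{i \in T}{\mu}_{i+1} < \mu$ bounds $\gamma$ on $B$, so for every $\xi \in B$ the whole antichain ${A}_{\xi}$ is contained in ${\PP}_{\delta}$.

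It remains to harvest the conclusion. Because ${\PP}_{\delta} \: \cs \: \PP$ and every relevant condition lies in ${\PP}_{\delta}$, for $\xi \in B$ we have $\xi \in A$ iff ${G}_{\delta}$ meets ${A}_{\xi}$; thus $A \cap B$ is computed from $B$, the ground-model sequence $\langle {A}_{\xi} : \xi \in B \rangle$, and ${G}_{\delta}$, whence $A \cap B \in V[{G}_{\delta}]$ and so $A \cap B \in \XXX$. Since $A, B \in \UUU$ we get $A \cap B \in \XXX \cap \UUU$ and $A \cap B \subseteq A$, which is precisely the generating property; combined with $\lc \XXX \rc \leq \mu$ this shows $\UUU$ is generated by at most $\mu$ sets.

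Finally, for the ``in particular'' clause I would invoke that ${\lambda}^{+}$-c.c.\@ forcing preserves all cardinals and cofinalities $\geq {\lambda}^{+}$, so $\kappa$ stays regular and $\kappa, \cf(\mu), \mu$ are absolute between $V$ and $V[G]$. Hence in $V[G]$ the family $\DDD$ together with all complements $\kappa \setminus S$ of sets $S \in [\kappa]^{< \kappa}$ still has the finite intersection property, because any finite intersection drawn from $\DDD$ has size $\kappa$ and $\kappa$ is regular; extending this family to an ultrafilter produces a uniform ultrafilter on $\kappa$ extending $\DDD$, to which the preceding paragraphs apply. Therefore $\PP$ forces $\uu(\kappa) \leq \mu$.
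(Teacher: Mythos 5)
Your argument is correct, but it is organized differently from the paper's proof. Both share the same engine: given a name for a subset $A$ of $\kappa$ together with, for each $\delta < \kappa$, a $\lambda$-sized family of conditions deciding ``$\delta \in \dot{A}$'', the hypothesis $\lambda < \cf(\mu)$ places the decision data for each single $\delta$ inside some ${\PP}_{{\alpha}_{i}}$, and the $\cf(\mu)$-indecomposability of $\DDD$ then traps the data for a $\DDD$-large set $B$ of coordinates inside one fixed ${\PP}_{\delta}$ with $\delta < \mu$. Where you diverge is in what gets counted. The paper never forms intermediate extensions: it attaches to each name a code (an ordinal $\alpha$, a set $D \in \DDD$, and the sequence of predense sets together with the $0$--$1$ pattern of what each condition forces), proves that two names with the same code agree on $D$ (Claim \ref{claim:general}), and takes as generators one realized name per code landing in $\UUU$; the bound $\leq \mu$ is a count of codes. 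You instead take the transparent generating family $\UUU \cap \bigcup_{\alpha < \mu}\left( \PPP(\kappa) \cap \V[{G}_{\alpha}] \right)$ and bound it by counting nice ${\PP}_{\alpha}$-names. Your route buys a cleaner, forcing-free description of the generating set, but it genuinely uses clause (5) of Definition \ref{def:filtration}: you need ${\PP}_{\delta} \: \cs \: \PP$ so that ${G}_{\delta}$ is generic and $\V[{G}_{\delta}]$ is a model. The paper's route does not use completeness at all, which the authors point out after the proof as a way of weakening the hypotheses. Two cosmetic points, neither affecting correctness: your intervals $[{\mu}_{i}, {\mu}_{i+1})$ partition $\mu$ only if the cofinal sequence is continuous (otherwise send $\xi$ to the least $i$ with $\gamma(\xi) < {\mu}_{i}$), and in the nice-name count you should note that ${\PP}_{\alpha}$ inherits the ${\lambda}^{+}$-c.c.\@ from $\PP$ via clause (4) of Definition \ref{def:cs}.
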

\begin{proof}
 Let $\seq{\PP}{\alpha}{<}{\mu}$ witness that $\pr{\PP}{{\leq}_{\PP}, {\mathbbm{1}}_{\PP}}$ has a $(\lambda, \kappa, \mu, \DDD)$-filtration.
 For each $\alpha < \mu$, let ${A}_{\alpha} = {\PP}^{\lambda}_{\alpha}$ and let $B = {}^{\lambda}{2}$.
 Define ${L}_{\alpha} = {A}_{\alpha} \times B$.
 Then $\lc {L}_{\alpha} \rc < \mu$ because $\mu$ is a strong limit.
 For any $D \in \DDD$, define ${L}_{\alpha, D} = \{\alpha\} \times \{D\} \times {L}^{D}_{\alpha}$.
 Again $\lc {L}_{\alpha, D} \rc < \mu$, and so if $L = \bigcup\{{L}_{\alpha, D}: \pr{\alpha}{D} \in \mu \times \DDD\}$, then $\lc L \rc \leq \mu$.
 Fix a strictly increasing cofinal sequence $\seq{\alpha}{i}{<}{\cf(\mu)}$ in $\mu$.
 
 Suppose $\mathring{A}$ is any $\PP$-name such that ${\forces}_{\PP}{\mathring{A} \subseteq \kappa}$.
 We will associate a member of $L$ to $\mathring{A}$ as follows.
 Since $\PP$ is ${\lambda}^{+}$-c.c.\@, we can find for each $\delta < \kappa$, sequences $\left\langle {p}_{\mathring{A}, \delta, \varepsilon}: \varepsilon < \lambda \right\rangle$ and ${\tau}_{\mathring{A}, \delta}$ such that:
 \begin{enumerate}
  \item
  $\left\langle {p}_{\mathring{A}, \delta, \varepsilon}: \varepsilon < \lambda \right\rangle \in {\PP}^{\lambda}$ and $\left\{{p}_{\mathring{A}, \delta, \varepsilon}: \varepsilon < \lambda \right\}$ is a predense set in $\PP$;
  \item
  ${\tau}_{\mathring{A}, \delta} \in {}^{\lambda}{2}$ and for each $\varepsilon < \lambda$, $\left( {p}_{\mathring{A}, \delta, \varepsilon} \: \forces \: {\delta \in \mathring{A}} \right) \iff \left( {\tau}_{\mathring{A}, \delta}(\varepsilon) = 1 \right)$ and $\left( {p}_{\mathring{A}, \delta, \varepsilon} \: \forces \: {\delta \notin \mathring{A}} \right) \iff \left( {\tau}_{\mathring{A}, \delta}(\varepsilon) = 0 \right)$.
 \end{enumerate}
 For each $\delta < \kappa$ and $\varepsilon < \lambda$, there is $i(\mathring{A}, \delta, \varepsilon) < \cf(\mu)$ with ${p}_{\mathring{A}, \delta, \varepsilon} \in {\PP}_{{\alpha}_{i(\mathring{A}, \delta, \varepsilon)}}$.
 Since $\lambda < \cf(\mu)$, for each $\delta < \kappa$, there exists $i(\mathring{A}, \delta) < \cf(\mu)$ which satisfies $\forall \varepsilon < \lambda\[i(\mathring{A}, \delta, \varepsilon) < i(\mathring{A}, \delta)\]$.
 Now for each $i < \cf(\mu)$, put ${Y}_{i} = \{\delta < \kappa: i = i(\mathring{A}, \delta)\}$.
 Then $\seq{Y}{i}{<}{\cf(\mu)}$ is a partition of $\kappa$.
 Since $\DDD$ is $\cf(\mu)$-indecomposable, there is a set $T \subseteq \cf(\mu)$ such that $\lc T \rc < \cf(\mu)$ and ${\bigcup}_{i \in T}{{Y}_{i}} \in \DDD$.
 Define $D(\mathring{A}) = {\bigcup}_{i \in T}{{Y}_{i}}$.
 $\cf(\mu)$ being a regular cardinal, there is $i(\mathring{A}) < \cf(\mu)$ with $T \subseteq i(\mathring{A})$.
 Define $\alpha(\mathring{A}) = {\alpha}_{i(\mathring{A})} \in \mu$.
 Note that if $\delta \in D(\mathring{A})$, then $i(\mathring{A}, \delta) < i(\mathring{A})$, and so for any $\varepsilon < \lambda$, $i(\mathring{A}, \delta, \varepsilon) < i(\mathring{A}, \delta) < i(\mathring{A})$, whence ${p}_{\mathring{A}, \delta, \varepsilon} \in {\PP}_{{\alpha}_{i(\mathring{A}, \delta, \varepsilon)}} \subseteq {\PP}_{{\alpha}_{i(\mathring{A}, \delta)}} \subseteq {\PP}_{{\alpha}_{i(\mathring{A})}} = {\PP}_{\alpha(\mathring{A})}$.
 Thus we conclude that for each $\delta \in D(\mathring{A})$, $ \left\langle \langle {p}_{\mathring{A}, \delta, \varepsilon}: \varepsilon < \lambda \rangle, {\tau}_{\mathring{A}, \delta} \right\rangle \in {\PP}^{\lambda}_{\alpha(\mathring{A})} \times {}^{\lambda}{2} = {A}_{\alpha(\mathring{A})} \times B = {L}_{\alpha(\mathring{A})}$.
 Therefore $l(\mathring{A}) \in \left\{ \alpha(\mathring{A}) \right\} \times \left\{ D(\mathring{A}) \right\} \times {L}_{\alpha(\mathring{A})}^{D(\mathring{A})} = {L}_{\alpha(\mathring{A}), D(\mathring{A})} \subseteq L$, where
 \begin{align*}
 l(\mathring{A}) = \left\langle \alpha(\mathring{A}), D(\mathring{A}), \left\langle \left\langle \langle {p}_{\mathring{A}, \delta, \varepsilon}: \varepsilon < \lambda \rangle, {\tau}_{\mathring{A}, \delta} \right\rangle: \delta \in D(\mathring{A}) \right\rangle \right\rangle.
 \end{align*}
 \begin{Claim} \label{claim:general}
 Suppose $\mathring{A}$ and $\mathring{B}$ are $\PP$-names such that ${\forces}_{\PP}{\mathring{A} \subseteq \kappa}$ and ${\forces}_{\PP}{\mathring{B} \subseteq \kappa}$.
 Suppose $l = \left\langle \alpha, D, \left\langle \left\langle \langle {p}_{\delta, \varepsilon}: \varepsilon < \lambda \rangle, {\tau}_{\delta} \right\rangle: \delta \in D \right\rangle \right\rangle$ is a member of $L$ such that $l = l(\mathring{A}) = l(\mathring{B})$.
 Then ${\forces}_{\PP}{\mathring{A} \cap D = \mathring{B} \cap D}$.
 \end{Claim}
 \begin{proof}
  Suppose not.
  Then, without loss of generality, there exists $p \in \PP$ and $\delta \in D$ such that $p \: {\forces}_{\PP} \: \delta \in \mathring{A} \setminus \mathring{B}$.
  Since $\delta \in D = D(\mathring{A}) = D(\mathring{B})$, ${\tau}_{\mathring{A}, \delta} = {\tau}_{\delta} = {\tau}_{\mathring{B}, \delta}$ and $\langle {p}_{\mathring{A}, \delta, \varepsilon}: \varepsilon < \lambda \rangle = \langle {p}_{\delta, \varepsilon}: \varepsilon < \lambda \rangle = \langle {p}_{\mathring{B}, \delta, \varepsilon}: \varepsilon < \lambda \rangle$.
  In particular, $\left\{ {p}_{\delta, \varepsilon}: \varepsilon < \lambda \right\}$ is a predense set.
  Choose $\varepsilon < \lambda$ and $q \in \PP$ with $q \: {\leq}_{\PP} \: p, {p}_{\delta, \varepsilon}$.
  It follows from (2) that ${p}_{\delta, \varepsilon} \: {\forces}_{\PP} \: {\delta \in \mathring{A}}$, and hence ${\tau}_{\mathring{A}, \delta}(\varepsilon) = 1$.
  Similarly, ${p}_{\delta, \varepsilon} \: {\forces}_{\PP} \: {\delta \notin \mathring{B}}$, and hence ${\tau}_{\mathring{B}, \delta}(\varepsilon) = 0$.
  However this contradicts ${\tau}_{\mathring{A}, \delta}(\varepsilon) = {\tau}_{\delta}(\varepsilon) = {\tau}_{\mathring{B}, \delta}(\varepsilon)$.  
 \end{proof}  
 Suppose $G$ is a $(\V, \PP)$-generic filter.
 Since $\PP$ is ${\lambda}^{+}$-c.c.\@ and $\cf(\kappa) = \kappa \geq {\lambda}^{+}$, $\kappa$ remains a regular cardinal in $\VG$.
 Suppose $\UUU$ is any uniform ultrafilter on $\kappa$ extending $\DDD$.
 Define
 \begin{align*}
  K = \left\{l \in L: \exists \mathring{A} \in \VP\[{\left( {\forces}_{\PP}{\mathring{A} \subseteq \kappa} \right)}^{\V} \ \text{and} \ l(\mathring{A}) = l \ \text{and} \ \mathring{A}\[G\] \in \UUU \]\right\}.
 \end{align*}
 For each $l \in K$ choose ${\mathring{A}}_{l} \in \VP$ such that ${\left( {\forces}_{\PP}{{\mathring{A}}_{l} \subseteq \kappa} \right)}^{\V}$, $l({\mathring{A}}_{l}) = l$, and ${\mathring{A}}_{l}\[G\] \in \UUU$.
 Define $\XXX = \left\{D \cap {\mathring{A}}_{l}\[G\]: \pr{D}{l} \in \DDD \times K \right\}$.
 Note that if $\pr{D}{l} \in \DDD \times K$, then $D \in \UUU$ because $\UUU$ extends $\DDD$, and ${\mathring{A}}_{l}\[G\] \in \UUU$ by choice of ${\mathring{A}}_{l}$, whence $D \cap {\mathring{A}}_{l}\[G\] \in \UUU$.
 Thus $\XXX \subseteq \UUU$.
 Furthermore $\lc \XXX \rc \leq \mu$ (note that $\mu$ remains a cardinal in $\VG$).
 \begin{Claim} \label{claim:general1}
  $\XXX$ generates $\UUU$.
 \end{Claim}
 \begin{proof}
  Since $\XXX \subseteq \UUU$, $\{B \subseteq \kappa: \exists A \in \XXX\[A \subseteq B\]\} \subseteq \UUU$.
  Suppose that $C \in \UUU$.
  Find $\mathring{C} \in \VP$ with $C = \mathring{C}\[G\]$.
  Also find $p \in G$ so that ${\left( p \: {\forces}_{\PP} \: {\mathring{C} \subseteq \kappa} \right)}^{\V}$.
  In $\V$, applying the maximal principle, we can find a $\PP$-name $\mathring{B}$ so that ${\forces}_{\PP}{\mathring{B} \subseteq \kappa}$ and $p \: {\forces}_{\PP} \: {\mathring{B} = \mathring{C}}$.
  Let $l = l(\mathring{B}) \in L$.
  In $\VG$, $\mathring{B}\[G\] = \mathring{C}\[G\] = C \in \UUU$, and $\mathring{B}$ is a witness that $l \in K$.
  Hence ${\mathring{A}}_{l}$ is defined and $l({\mathring{A}}_{l}) = l = l(\mathring{B})$. 
  It follows from Claim \ref{claim:general} that for some $D \in \DDD$, ${\mathring{A}}_{l}\[G\] \cap D = \mathring{B}\[G\] \cap D = C \cap D$.
  Since $\pr{D}{l} \in \DDD \times K$, $D \cap {\mathring{A}}_{l}\[G\] \in \XXX$.
  So $C \cap D \in \XXX$, and since $C \cap D \subseteq C$, $C \in \{B \subseteq \kappa: \exists A \in \XXX\[A \subseteq B\]\}$.
  Thus $\UUU = \{B \subseteq \kappa: \exists A \in \XXX\[A \subseteq B\]\}$, as needed.
 \end{proof}
 This proves that $\UUU$ is generated by a set of size at most $\mu$.
 Since $\DDD$ is a uniform filter on $\kappa$, there is at least one uniform ultrafilter on $\kappa$ extending $\DDD$.
 Therefore $\uu(\kappa) \leq \mu$ in $\VG$.
\end{proof}
The proof of Theorem \ref{thm:general} shows that the results in Section 4 of \cite{sh:304} can also be obtained from the assumption that $\PP$ has a $(\lambda, \kappa, \mu, \DDD)$-filtration.
Note also that the condition that ${\PP}_{\alpha} \: \cs \: \PP$ for every $\alpha < \mu$ is not used in the proof of Theorem \ref{thm:general}.
Hence this theorem can be proved under a weaker formulation of Definition \ref{def:filtration}.
We leave it to the interested reader to formulate the optimal hypotheses under which the proof of Theorem \ref{thm:general} can be carried out. 
\section{Small ultrafilter number at the continuum} \label{sec:uc}
 Several posets of the form $\Fn(I, J, \chi)$ as well as products of such posets have a $(\lambda, \kappa, \mu, \DDD)$-filtration for suitable values of the cardinals $\lambda, \kappa$, and $\mu$, and any uniform $\cf(\mu)$-indecomposable filter $\DDD$ on $\kappa$.  
 \begin{Lemma} \label{lem:Pfiltration}
  Suppose that $\lambda, \kappa, \mu$, and $\DDD$ satisfy (1)--(3) of Definition \ref{def:filtration}.
  Then $\Fn(\mu\times\lambda, 2, \lambda)$ has a $(\lambda, \kappa, \mu, \DDD)$-filtration.
 \end{Lemma}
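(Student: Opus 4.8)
The plan is to filter $\PP = \Fn(\mu \times \lambda, 2, \lambda)$ by initial segments of the coordinate set. Recall that $\PP$ consists of the partial functions $p : \mu \times \lambda \to 2$ with $\lc \dom(p) \rc < \lambda$, ordered by reverse inclusion, with the empty function as $\mathbbm{1}_{\PP}$. For each $\alpha < \mu$ I would set $\PP_{\alpha} = \Fn(\alpha \times \lambda, 2, \lambda) = \{p \in \PP : \dom(p) \subseteq \alpha \times \lambda\}$ and verify that $\seq{\PP}{\alpha}{<}{\mu}$ witnesses a $(\lambda, \kappa, \mu, \DDD)$-filtration. Clauses (1)--(3) of Definition \ref{def:filtration} hold by hypothesis, so all the work lies in clauses (4)--(6). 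Throughout I would use that $\lambda^{<\lambda} = \lambda$ forces $\lambda$ to be regular, so that a union of fewer than $\lambda$ sets each of size $< \lambda$ again has size $< \lambda$; this small-domain stability is what every remaining verification rests on.

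For clause (4), that $\PP$ is $\lambda^{+}$-c.c.\ is the standard $\Delta$-system argument: given $\lambda^{+}$ conditions, the generalized $\Delta$-system lemma (applicable since $\lambda$ is regular and $\nu^{<\lambda} \le \lambda^{<\lambda} = \lambda < \lambda^{+}$ for every cardinal $\nu \le \lambda$) yields $\lambda^{+}$ of them whose domains form a $\Delta$-system with root $r$, where $\lc r \rc < \lambda$; as there are only $2^{\lc r \rc} \le 2^{<\lambda} \le \lambda^{<\lambda} = \lambda$ possible restrictions to $r$, two of the chosen conditions agree on $r$ and are therefore compatible. For the second half of clause (4), if $p \in \PP$ then the projection of $\dom(p)$ to $\mu$ is a subset of $\mu$ of size $< \lambda < \cf(\mu)$, hence bounded below $\mu$ by some $\alpha$, so $p \in \PP_{\alpha}$. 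Clause (6) is pure cardinal arithmetic: with $\theta = \lc \alpha \times \lambda \rc = \max(\lc \alpha \rc, \lambda) < \mu$, the number of partial functions into $2$ whose domain is a subset of $\alpha \times \lambda$ of size $< \lambda$ is at most $\theta^{<\lambda} \cdot 2^{<\lambda} \le 2^{\theta} < \mu$, the last inequality because $\mu$ is a strong limit. The nested inclusions $\PP_{\xi} \subseteq \PP_{\alpha}$ for $\xi < \alpha$ in clause (5) are immediate from $\xi \times \lambda \subseteq \alpha \times \lambda$.

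The main point, and the part I would treat most carefully, is the completeness $\PP_{\alpha} \: \cs \: \PP$ in clause (5). Conditions (1)--(4) of Definition \ref{def:cs} are routine: both posets share the empty condition, the order and (by absoluteness of the statement that $p \cup p'$ is a function) the incompatibility relation are inherited, and a common extension of two members of $\PP_{\alpha}$ can be taken inside $\PP_{\alpha}$, since its domain stays of size $< \lambda$ and inside $\alpha \times \lambda$. For condition (5), that a maximal antichain $A$ of $\PP_{\alpha}$ stays maximal in $\PP$, I would exhibit the restriction map as a projection: given any $q \in \PP$, put $p = q \restrict (\alpha \times \lambda) \in \PP_{\alpha}$; if $a \in A$ is compatible with $p$ in $\PP_{\alpha}$, then since $\dom(a) \subseteq \alpha \times \lambda$ and $q$ agrees with $p$ there, $a$ and $q$ agree on $\dom(a) \cap \dom(q)$, so $a \cup q$ is a condition of $\PP$ extending both. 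Hence every $q \in \PP$ is compatible with some member of $A$, which together with condition (4) yields maximality of $A$ in $\PP$. Alternatively one can observe that $\PP \cong \PP_{\alpha} \times \Fn((\mu \setminus \alpha) \times \lambda, 2, \lambda)$ and invoke Lemma \ref{lem:product} with the trivial poset in place of one factor.

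I do not anticipate a genuine obstacle here; the only thing requiring vigilance is the repeated, tacit appeal to the regularity of $\lambda$ so that domains never swell to size $\lambda$ when one forms unions of conditions or counts restrictions. Assembling these verifications gives that $\seq{\PP}{\alpha}{<}{\mu}$ is a $(\lambda, \kappa, \mu, \DDD)$-filtration of $\Fn(\mu \times \lambda, 2, \lambda)$, as required.
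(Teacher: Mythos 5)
Your proposal is correct and follows essentially the same route as the paper: the same filtration $\PP_{\alpha} = \Fn(\alpha\times\lambda,2,\lambda)$, with the chain condition, the boundedness of domains below $\cf(\mu)$, the completeness of the suborders, and the strong-limit cardinality count all verified exactly as in the paper's proof. The only difference is that you spell out the $\Delta$-system and restriction-map arguments that the paper delegates to standard references (Lemma 6.10 of Kunen and the standard fact that $\Fn(I,2,\lambda)\:\cs\:\Fn(I',2,\lambda)$ for $I\subseteq I'$).
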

 \begin{proof}
  For each $\alpha < \mu$, define ${\PP}_{\alpha}$ to be $\Fn(\alpha\times\lambda, 2, \lambda)$.
  We will check that the sequence $\seq{\PP}{\alpha}{<}{\mu}$ witnesses that there is a $(\lambda, \kappa, \mu, \DDD)$-filtration.
  Clauses (1)--(3) of Definition \ref{def:filtration} are already satisfied by hypothesis.
  It is well-known (see Lemma 6.10 of \cite{Kunen}) that $\Fn(\mu\times\lambda, 2, \lambda)$ is ${\left( {2}^{< \lambda} \right)}^{+}$-c.c.
  This means that $\Fn(\mu\times\lambda, 2, \lambda)$ is ${\left( \lambda \right)}^{+}$-c.c.\@ because ${\lambda}^{< \lambda} = \lambda$.
  Also for each $p \in \Fn(\mu\times\lambda, 2, \lambda)$, there exists $\alpha < \mu$ with $p \in \Fn(\alpha\times\lambda, 2, \lambda) = {\PP}_{\alpha}$ because $\lambda < \cf(\mu)$.
  Next for any $\alpha < \mu$, $\alpha\times\lambda \subseteq \mu\times\lambda$, and so ${\PP}_{\alpha} = \Fn(\alpha\times\lambda, 2, \lambda) \: \cs \: \Fn(\mu\times\lambda, 2, \lambda)$.
  Similarly if $\xi < \alpha$, then ${\PP}_{\xi} = \Fn(\xi\times\lambda, 2, \lambda) \: \cs \: \Fn(\alpha\times\lambda, 2, \lambda) = {\PP}_{\alpha}$.
  Finally for each $\alpha < \mu$, $\lc {\PP}_{\alpha} \rc < \mu$ because $\mu$ is a strong limit cardinal.
  Therefore (1)--(6) of Definition \ref{def:filtration} are satisfied.
 \end{proof}
 \begin{Lemma} \label{lem:PRfiltration}
  Suppose that $\lambda, \kappa, \mu$, and $\DDD$ satisfy (1)--(3) of Definition \ref{def:filtration}.
  Suppose moreover that $\theta$ is an infinite cardinal such that $\theta < \lambda$, $\theta$ is regular, ${2}^{< \theta} = \theta$, ${\kappa}^{\theta} = \kappa$, and $\cf(\kappa) = \kappa$.
  Let $\PP = \Fn(\mu\times\lambda, 2, \lambda)$ and $\RR = \Fn(\kappa\times\theta, 2, \theta)$. 
  Then  $\PP \times \RR$ has a $(\lambda, \kappa, \mu, \DDD)$-filtration.
 \end{Lemma}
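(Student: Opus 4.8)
The plan is to deduce this from the two previous lemmas rather than to build the filtration by hand. By Lemma~\ref{lem:Pfiltration}, $\PP = \Fn(\mu\times\lambda, 2, \lambda)$ already has a $(\lambda, \kappa, \mu, \DDD)$-filtration, so by Lemma~\ref{lem:productfiltration} it is enough to verify the two remaining hypotheses of that lemma for $\RR = \Fn(\kappa\times\theta, 2, \theta)$: that $\lc \RR \rc < \mu$, and that ${\forces}_{\PP}{\text{``} \: \check{\RR} \ \text{is} \ {\check{\lambda}}^{+}\text{-c.c.''}}$.

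For the first, I would simply count conditions. Since $\theta < \lambda < \kappa$ and $\lc \kappa\times\theta \rc = \kappa$, a condition is a partial function from $\kappa\times\theta$ into $2$ with domain of size $< \theta$, so $\lc \RR \rc \leq {\kappa}^{< \theta} \cdot {2}^{< \theta}$. The hypotheses ${\kappa}^{\theta} = \kappa$ and ${2}^{< \theta} = \theta$ give ${\kappa}^{< \theta} = \kappa$ and ${2}^{< \theta} = \theta$, whence $\lc \RR \rc = \kappa$. As $\kappa < \mu$ by clause (1) of Definition~\ref{def:filtration}, this yields $\lc \RR \rc < \mu$.

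The main work is the chain condition in the extension, and the key observation I would exploit is that $\PP$ is ${<}\lambda$-closed. Indeed ${\lambda}^{< \lambda} = \lambda$ implies that $\lambda$ is regular, so the union of a decreasing sequence of fewer than $\lambda$ conditions of $\PP$ is again a condition. Consequently $\PP$ adds no new sequences of length $< \lambda$; since every condition of $\RR$ has domain of size $< \theta < \lambda$, no new conditions of $\RR$ appear and $\RR$ is literally the same poset in $\V$ and in ${\V}^{\PP}$. For the same reason $\PPP(\gamma)$ is unchanged for every $\gamma < \lambda$, so $\theta$ remains regular and ${2}^{< \theta} = \theta$ continues to hold in ${\V}^{\PP}$. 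The usual ${\Delta}$-system argument (Lemma 6.10 of \cite{Kunen}) then shows, inside ${\V}^{\PP}$, that $\RR = \Fn(\kappa\times\theta, 2, \theta)$ is ${\theta}^{+}$-c.c. Since $\theta < \lambda$, every antichain of $\RR$ has size at most $\theta < {\lambda}^{+}$, so $\RR$ is ${\lambda}^{+}$-c.c.\@ in ${\V}^{\PP}$, which is precisely ${\forces}_{\PP}{\text{``} \: \check{\RR} \ \text{is} \ {\check{\lambda}}^{+}\text{-c.c.''}}$

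With both hypotheses in hand, Lemma~\ref{lem:productfiltration} delivers that $\PP\times\RR$ has a $(\lambda, \kappa, \mu, \DDD)$-filtration, completing the proof. The only point requiring real care is the preservation of the chain condition of $\RR$ after forcing with $\PP$, and I expect this to be the main obstacle; the ${<}\lambda$-closure of $\PP$ is what makes it routine, since it freezes both the poset $\RR$ and the instances of cardinal arithmetic (${2}^{< \theta} = \theta$ and the regularity of $\theta$) that are needed to rerun the ${\Delta}$-system lemma in the extension.
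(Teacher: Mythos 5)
Your proposal is correct and follows essentially the same route as the paper: reduce to Lemma~\ref{lem:productfiltration} via Lemma~\ref{lem:Pfiltration}, bound $\lc \RR \rc$ below $\mu$, and use the ${<}\lambda$-closure of $\PP$ to see that $\RR$ and the relevant cardinal arithmetic are unchanged in the extension, so that the $\Delta$-system argument gives the ${\theta}^{+}$-c.c.\@ (hence ${\lambda}^{+}$-c.c.) there. The only cosmetic difference is that you count conditions directly using ${\kappa}^{\theta} = \kappa$ to get $\lc \RR \rc = \kappa < \mu$, whereas the paper just cites that $\mu$ is a strong limit; both are fine.
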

 \begin{proof}
  We will check the hypotheses of Lemma \ref{lem:productfiltration}.
  Firstly, by Lemma \ref{lem:Pfiltration} $\PP$ has a $(\lambda, \kappa, \mu, \DDD)$-filtration.
  Using the fact that $\mu$ is a strong limit cardinal, it is easy to verify that $\lc \RR 
  \rc < \mu$.
  Finally suppose that $G$ is $(\V, \PP)$-generic.
  Since $\PP$ is $\lambda$-closed in $\V$, it follows that $\RR$ is still $\Fn(\kappa\times\theta, 2, \theta)$ as calculated in $\VG$.
  Similarly in $\VG$, ${2}^{< \theta} = \theta$ holds, and so $\RR$ is ${\theta}^{+}$-c.c.\@ in $\VG$.
  Therefore in $\V$, ${\forces}_{\PP}{\text{``} \: \check{\RR} \ \text{is} \ {\check{\lambda}}^{+}\text{-c.c.\@''}}$.
  Hence $\PP \times \RR$ also has a $(\lambda, \kappa, \mu, \DDD)$-filtration by Lemma \ref{lem:productfiltration}.
 \end{proof}
 We are now able to show that if there is a measurable cardinal $\kappa$, then for any regular cardinal $\theta < \kappa$ satisfying ${2}^{<\theta} = \theta$, it is possible to force $\uu({2}^{\theta}) < {2}^{{2}^{\theta}}$.
 \begin{Theorem} \label{thm:main1}
  Suppose that $\theta$, $\lambda, \kappa$, and $\mu$ are infinite cardinals satisfying $\theta < \lambda < \cf(\mu) < \kappa < \mu$, that $\theta$ is regular, and that ${2}^{< \theta} = \theta$.
  Assume also that $\mu$ is a strong limit cardinal and that ${\lambda}^{< \lambda} = \lambda$.
  Suppose moreover that $\kappa$ is measurable and that $\DDD$ is a normal measure on $\kappa$.
  Then there is a cofinality preserving extension in which ${2}^{\theta} = \kappa$, $\uu(\kappa) \leq \mu$, and ${2}^{\kappa} = {\mu}^{\kappa} > \mu$.
 \end{Theorem}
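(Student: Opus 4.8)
The plan is to let the desired extension be the one obtained by forcing with the product $\PP \times \RR$, where $\PP = \Fn(\mu \times \lambda, 2, \lambda)$ and $\RR = \Fn(\kappa \times \theta, 2, \theta)$ are exactly the posets of Lemma~\ref{lem:PRfiltration}; the bound $\uu(\kappa) \leq \mu$ will then come essentially for free. First I would check that $\lambda, \kappa, \mu, \DDD$ satisfy clauses (1)--(3) of Definition~\ref{def:filtration}: (1) and (2) are among the hypotheses of the present theorem, and for (3) I note that a normal measure $\DDD$ on $\kappa$ is uniform and $\kappa$-complete, hence $\cf(\mu)$-indecomposable in the sense of Definition~\ref{def:indec}, because $\cf(\mu) < \kappa$ forces a single piece of any partition into $\cf(\mu)$ parts to lie in $\DDD$. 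Next I would verify the remaining hypotheses of Lemma~\ref{lem:PRfiltration}: $\theta < \lambda$, $\theta$ regular and $2^{<\theta} = \theta$ are given, while $\cf(\kappa) = \kappa$ and $\kappa^\theta = \kappa$ follow from $\kappa$ being measurable, hence inaccessible (for the latter, every $f : \theta \to \kappa$ is bounded since $\theta < \kappa = \cf(\kappa)$, and $\alpha^\theta < \kappa$ for $\alpha < \kappa$ by strong limitness). Lemma~\ref{lem:PRfiltration} then gives that $\PP \times \RR$ has a $(\lambda, \kappa, \mu, \DDD)$-filtration, and since $\cf(\kappa) = \kappa$, Theorem~\ref{thm:general} yields that $\PP \times \RR$ forces $\uu(\kappa) \leq \mu$.

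For cofinality preservation I would view the product as the two-step iteration that first forces with $\PP$ and then with $\RR$. The poset $\PP$ is $\lambda$-closed and, by Lemma~\ref{lem:Pfiltration}, ${\lambda}^{+}$-c.c., so it preserves all cofinalities; in particular $\kappa$ and $\mu$ remain cardinals. Since $\PP$ is $\lambda$-closed and $\theta < \lambda$, it adds no new sequences of length $< \theta$, so $\RR$ is computed identically in $\V$ and in $\V[{G}_{\PP}]$, where it is $\theta$-closed and, as $2^{<\theta} = \theta$ is preserved, also ${\theta}^{+}$-c.c. A $\theta$-closed, ${\theta}^{+}$-c.c.\@ forcing preserves every cofinality, so the second step is cofinality preserving as well, and hence so is the whole extension.

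The cardinal arithmetic is then computed in $\VG$, writing $G = {G}_{\PP} \times {G}_{\RR}$ and $\VG = \V[{G}_{\PP}][{G}_{\RR}]$. For $2^\theta = \kappa$: the generic for $\RR$ supplies $\kappa$ pairwise distinct subsets of $\theta$, giving $2^\theta \geq \kappa$; for the reverse inequality I would count nice $\RR$-names over the intermediate model $\V[{G}_{\PP}]$, where $\RR$ is ${\theta}^{+}$-c.c.\@ of size $\kappa$ and where $\kappa^\theta = \kappa$ still holds (as $\PP$ adds no new functions $\theta \to \kappa$, their length being $\theta < \lambda$), so there are at most $\kappa^\theta = \kappa$ such names and thus at most $\kappa$ subsets of $\theta$ in $\VG$. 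For $2^\kappa = \mu^\kappa > \mu$: the generic for $\PP$ supplies $\mu$ pairwise distinct subsets of $\lambda$, so $2^\kappa \geq 2^\lambda \geq \mu$; from $\mu \leq 2^\kappa$ one gets $\mu^\kappa \leq {(2^\kappa)}^\kappa = 2^\kappa \leq \mu^\kappa$, hence $2^\kappa = \mu^\kappa$. Finally $\mu^\kappa > \mu$ follows from K\"onig's theorem, since $\cf(\mu)$ is preserved and $\kappa > \cf(\mu)$ give $\mu^\kappa \geq \mu^{\cf(\mu)} > \mu$ in $\VG$.

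The routine parts are the ${\lambda}^{+}$-c.c.\@ and closure bookkeeping, which is already packaged in Lemmas~\ref{lem:Pfiltration} and~\ref{lem:PRfiltration}. The main obstacle is obtaining the \emph{sharp} upper bound $2^\theta \leq \kappa$ rather than the crude $2^\theta \leq \mu$ that a direct nice-name count over $\PP \times \RR$ would produce: this is what forces the decomposition into the iteration and the verification that $\kappa^\theta = \kappa$ survives the $\lambda$-closed factor $\PP$. The same decomposition is what makes cofinality preservation work, since $\PP \times \RR$ as it stands is only $\theta$-closed and ${\lambda}^{+}$-c.c., which by itself would leave the cofinalities in the interval $(\theta, \lambda]$ unaccounted for.
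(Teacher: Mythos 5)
Your proposal is correct and follows essentially the same route as the paper: force with $\PP \times \RR$ where $\PP = \Fn(\mu\times\lambda,2,\lambda)$ and $\RR = \Fn(\kappa\times\theta,2,\theta)$, invoke Lemma~\ref{lem:PRfiltration} and Theorem~\ref{thm:general} for $\uu(\kappa)\leq\mu$, and handle cofinalities and cardinal arithmetic by the standard two-step (closed-then-c.c.) analysis. The paper compresses the cofinality preservation and the computations $2^\theta=\kappa$, $2^\lambda=\mu$ into citations of ``well-known'' and ``standard arguments'' (Kunen, Theorem~6.18); you have simply unpacked those, correctly.
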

 \begin{proof}
  Note that since $\kappa$ is measurable and $\DDD$ is a normal measure on $\kappa$, $\cf(\kappa) = \kappa$, ${\kappa}^{\theta} = \kappa$, and $\DDD$ is a uniform $\cf(\mu)$-indecomposable ultrafilter on $\kappa$.
  Let $\PP = \Fn(\mu\times\lambda, 2, \lambda)$ and $\RR = \Fn(\kappa\times\theta, 2, \theta)$.
  It is well-known that $\PP \times \RR$ is cofinality preserving.
  By Lemma \ref{lem:PRfiltration}, $\PP \times \RR$ has a $(\lambda, \kappa, \mu, \DDD)$-filtration.
  Suppose $H$ is a $(\V, \PP \times \RR)$-generic filter.
  By Theorem \ref{thm:general}, $\uu(\kappa) \leq \mu$ holds in $\V\[H\]$.
  By standard arguments (see proof of Theorem 6.18 in \cite{Kunen}), $\V\[H\]$ satisfies ${2}^{\theta} = \kappa$ and ${2}^{\lambda} = \mu$.
  Since cofinalities and cardinals are preserved, we have ${2}^{\kappa} = {\left( {2}^{\lambda} \right)}^{\kappa} = {\mu}^{\kappa} \geq {\mu}^{\cf(\mu)} > \mu$.
 \end{proof}
 \begin{Cor} \label{cor:continuum}
  It is consistent relative to a measurable cardinal that there is a uniform ultrafilter on the reals which is generated by fewer than ${2}^{{2}^{{\aleph}_{0}}}$ many sets.
 \end{Cor}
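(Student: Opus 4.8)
The plan is to derive the corollary from Theorem \ref{thm:main1} by taking $\theta = {\aleph}_{0}$, so that the cardinal $\kappa$ in that theorem becomes the continuum of the forcing extension. First I would arrange a convenient ground model: since the corollary only asserts consistency relative to a measurable cardinal, I may assume without loss of generality that $\V \models \GCH$ and that $\kappa$ is measurable with a normal measure $\DDD$ (for instance, this holds in $L[U]$, or can be forced from any measurable by the standard reverse Easton iteration, which preserves measurability). The point of passing to a model of $\GCH$ is to secure the two arithmetic hypotheses $2^{< \theta} = \theta$ and ${\lambda}^{< \lambda} = \lambda$ that Theorem \ref{thm:main1} requires of $\theta$ and $\lambda$.

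Next I would fix the remaining parameters. Take $\theta = {\aleph}_{0}$, which is regular and satisfies $2^{< \theta} = 2^{< {\aleph}_{0}} = {\aleph}_{0} = \theta$. Take $\lambda = {\aleph}_{1}$; then ${\lambda}^{< \lambda} = {{\aleph}_{1}}^{{\aleph}_{0}} = {\aleph}_{1} = \lambda$ by $\CH$. For $\mu$ I would choose any strong limit cardinal above $\kappa$ of cofinality ${\aleph}_{2}$; concretely, ${\beth}_{\kappa + {\omega}_{2}}$ works, since ${\beth}_{\eta}$ is a strong limit cardinal for every limit ordinal $\eta$, it exceeds ${\beth}_{\kappa} = \kappa$ (recall that $\kappa$, being measurable, is strongly inaccessible and hence a ${\beth}$-fixed point), and $\cf\left( {\beth}_{\kappa + {\omega}_{2}} \right) = \cf\left( {\omega}_{2} \right) = {\aleph}_{2}$. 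With these choices the chain of inequalities $\theta < \lambda < \cf(\mu) < \kappa < \mu$ reads ${\aleph}_{0} < {\aleph}_{1} < {\aleph}_{2} < \kappa < {\beth}_{\kappa + {\omega}_{2}}$, which holds because the measurable cardinal $\kappa$ lies far above ${\aleph}_{2}$. Thus all the hypotheses of Theorem \ref{thm:main1} are met.

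Finally I would invoke Theorem \ref{thm:main1} to obtain a cofinality preserving extension $\V[H]$ in which $2^{\theta} = \kappa$, $\uu(\kappa) \leq \mu$, and $2^{\kappa} = {\mu}^{\kappa} > \mu$. Since $\theta = {\aleph}_{0}$, the first equality says ${2}^{{\aleph}_{0}} = \kappa$, i.e.\@ $\kappa$ is the continuum of $\V[H]$; as the extension preserves cofinalities and $\cf(\kappa) = \kappa$ in $\V$, $\kappa$ remains a regular cardinal there, so $\uu(\kappa)$ really is the ultrafilter number at the reals. Combining the three facts gives $\uu\left( {2}^{{\aleph}_{0}} \right) = \uu(\kappa) \leq \mu < {2}^{\kappa} = {2}^{{2}^{{\aleph}_{0}}}$. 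Tracing back through the proof of Theorem \ref{thm:general}, the witnessing object is an honest uniform ultrafilter on $\kappa$ (namely one extending $\DDD$) generated by at most $\mu$ sets; transporting it along a bijection between $\kappa$ and $\RR$ then yields a uniform ultrafilter on the reals generated by fewer than ${2}^{{2}^{{\aleph}_{0}}}$ sets, as required.

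I do not expect any serious obstacle: the genuine content sits entirely in Theorem \ref{thm:main1}, and this corollary is a matter of specializing $\theta$ to ${\aleph}_{0}$ and exhibiting suitable $\lambda$ and $\mu$. The only mild point requiring care is the ground-model arithmetic, since one needs $\CH$ to obtain ${\lambda}^{< \lambda} = \lambda$ at $\lambda = {\aleph}_{1}$, together with a measurable cardinal and a strong limit $\mu$ of the right cofinality. This is precisely why I begin by reducing to a model of $\GCH$ containing a measurable cardinal.
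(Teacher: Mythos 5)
Your proposal is correct and follows exactly the paper's route: the paper's entire proof is ``Apply Theorem \ref{thm:main1} with $\theta = \omega$,'' and you have simply spelled out a valid choice of ground model and of the parameters $\lambda$ and $\mu$ (which the paper leaves implicit). No issues.
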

 \begin{proof}
  Apply Theorem \ref{thm:main1} with $\theta = \omega$.
 \end{proof}
Note that the fact that $\mu$ is a singular cardinal with cofinality less than $\kappa$ plays a crucial role in these proofs.
In Corollary \ref{cor:continuum}, if we assume that $\GCH$ holds in the ground model and pick the minimal values of $\lambda$ and $\mu$ required to run the proof, then in the resulting model, ${2}^{{\aleph}_{0}} = \kappa$ and ${2}^{{2}^{{\aleph}_{0}}} > {\kappa}^{+{\aleph}_{2}}$.
\section{Small ultrafilter number at ${\aleph}_{\omega+1}$} \label{sec:alephomega+1}
The continuum is a weakly inaccessible cardinal in the model constructed in the previous section.
In this section we will get a model where $\uu(\kappa) < {2}^{\kappa}$ for a $\kappa$ which is well below the first weakly inaccessible cardinal, namely ${\aleph}_{\omega+1}$.
However we must start with a supercompact cardinal.

Thomas and Shelah~\cite{sh:304} considered the following statement for regular cardinals $\kappa$:
\begin{align*}
&\text{If} \ G \ \text{is any subgroup of} \  \mathord{\mathrm{Sym}(\kappa)} \ \text{with} \ [\mathord{\mathrm{Sym}(\kappa)}:G] < {2}^{\kappa}\text{, then there exists} \tag{${\ast}_{\kappa}$} \label{eq:ast} \\ 
&\Delta \subseteq \kappa \ \text{such that} \ \lc \Delta \rc < \kappa \ \text{and} \ {S}_{\left( \Delta \right)} \ \text{is a subgroup of} \ G.
\end{align*}
Here $\mathord{\mathrm{Sym}(\kappa)}$ is the symmetric group on $\kappa$ and ${S}_{\left( \Delta \right)}$ denotes the pointwise stabilizer of the set $\Delta \subseteq \kappa$.
It turns out that $\left( {\ast}_{{\aleph}_{0}} \right)$ is a theorem of $\ZFC$.
In \cite{sh:304}, Shelah and Thomas used a supercompact cardinal to produce a model where $\left( {\ast}_{{\aleph}_{\omega+1}} \right)$ fails.
We show below that $\uu({\aleph}_{\omega+1}) < {2}^{{\aleph}_{\omega+1}}$ in this model constructed by Shelah and Thomas, and moreover our proof is quite similar to their argument that $\left( {\ast}_{{\aleph}_{\omega+1}} \right)$ fails.
However we are not aware of any direct connection between $\left( {\ast}_{{\aleph}_{\omega+1}} \right)$ and $\uu({\aleph}_{\omega+1})$.
It would be especially interesting if the failure of $\left( {\ast}_{\kappa} \right)$ implied $\uu(\kappa) < {2}^{\kappa}$ for some uncountable regular $\kappa$.

In order to apply Theorem \ref{thm:general} with $\kappa = {\aleph}_{\omega+1}$, it must be possible to find uniform filters on ${\aleph}_{\omega+1}$ that are ${\aleph}_{n}$-indecomposable for some $n < \omega$.
A by now classical theorem of Ben-David and Magidor~\cite{indecomposable} says that it is consistent relative to a supercompact cardinal to have a uniform ultrafilter on ${\aleph}_{\omega+1}$ which is ${\aleph}_{n}$-indecomposable for all $0 < n < \omega$.
\begin{Theorem}[Ben-David and Magidor~\cite{indecomposable}] \label{thm:bendavidmagidor}
 Assume that there is a supercompact cardinal.
 There is a forcing extension in which $\GCH$ holds and there is a uniform ultrafilter on ${\aleph}_{\omega+1}$ which is ${\aleph}_{n}$-indecomposable for all $0 < n < \omega$.
\end{Theorem}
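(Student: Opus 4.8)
Since this is the theorem of Ben-David and Magidor, the plan is to reconstruct their construction rather than to find a new route. The strategy is to start from the supercompact cardinal $\kappa$ and force it to become $\aleph_{\omega+1}$ by collapsing the cardinals beneath it, while keeping $\kappa$ regular and arranging that a ground-model measure on $\kappa$ survives in the extension as a uniform ultrafilter retaining just enough completeness to be $\aleph_n$-indecomposable for every $0 < n < \omega$. The whole point is to singularize the \emph{predecessor} of $\kappa$, not $\kappa$ itself: in $\VG$ the cardinal below $\kappa$ should be $\aleph_\omega$ with $\cf(\aleph_\omega) = \omega$, while $\kappa = \aleph_{\omega+1}$ stays a successor, hence regular.

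First I would run a preparatory forcing to arrange that $\GCH$ holds and that the supercompactness of $\kappa$ is indestructible under the subsequent $\kappa$-directed-closed forcing (a Laver preparation); after this step $\kappa$ is still supercompact, and I fix a normal measure $\DDD$ on $\kappa$ together with a coherent system of supercompactness measures to be used both in defining the main forcing and in verifying its Prikry property. Then I would force with a Magidor-style Prikry forcing $\MM$ with interleaved Levy collapses. Its generic object prescribes an increasing $\omega$-sequence of cardinals below $\kappa$ together with generic collapses on the successive intervals, chosen so that in $\VG$ these cardinals become the $\aleph_n$, their supremum becomes $\aleph_\omega$ of cofinality $\omega$, and the interval up to $\kappa$ is collapsed so that $\kappa = \aleph_{\omega+1}$. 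Since $\kappa$ is thereby a successor cardinal it remains regular, and the preparation ensures $\GCH$ survives.

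It remains to extract the ultrafilter and verify indecomposability. The Prikry-type analysis of $\MM$ shows that the ground-model measure $\DDD$ generates a uniform ultrafilter $\UUU$ on $\kappa$ in $\VG$. For $\aleph_n$-indecomposability I would take a partition $\langle Y_\xi : \xi < \aleph_n \rangle \in \VG$ of $\kappa$ and its index function $f$, use the Prikry property to pass below a condition with a fixed stem on whose associated measure-one sets the assignment of each $\delta < \kappa$ to its piece is controlled, and then appeal to the $\kappa$-completeness of the measures to produce a set $T$ of size $< \aleph_n$ with $\bigcup_{\xi \in T} Y_\xi \in \UUU$. Because $\MM$ interleaves collapses of the intervals between the successive generic cardinals, one cannot in general reduce the partition to a single piece, so $\UUU$ is not complete but only indecomposable; this is exactly the phenomenon that makes indecomposable ultrafilters available at the successor of a singular cardinal.

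The hard part will be the last step: establishing the Prikry property of the collapsing forcing and extracting $\aleph_n$-indecomposability from it uniformly in $n$. The delicate feature is that indecomposability must be checked against partitions living in the final model, where each $\aleph_n$ is a small cardinal and where the interleaved collapses add genuinely new subsets of $\kappa$; one must therefore show that every such partition is nevertheless captured well enough by the ground-model measures for the $\kappa$-completeness of $\DDD$ to be brought to bear, and do so simultaneously for all $n$ while keeping the collapses compatible with the Prikry property. This combinatorial core is the heart of the Ben-David and Magidor argument, and I would simply cite it.
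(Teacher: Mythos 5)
This theorem is not proved in the paper at all: it is imported from Ben-David and Magidor's article and used as a black box, so there is no internal proof to compare your sketch against, and your own sketch also ends by citing \cite{indecomposable} for the combinatorial core. Judged on its own terms, however, the sketch contains a step that fails as stated. You claim that after the Prikry-style forcing with interleaved collapses, ``the ground-model measure $\DDD$ generates a uniform ultrafilter $\UUU$ on $\kappa$ in $\VG$,'' where ``generates'' must mean that the filter generated by $\DDD$ is already an ultrafilter in the extension. Ground-model measures essentially never generate ultrafilters after Prikry-type forcing: already for ordinary Prikry forcing at a measurable $\kappa$, the generic $\omega$-sequence $C \subseteq \kappa$ is almost contained in every measure-one set, so $\kappa \setminus C$ contains no set of the measure, while $C$ itself is countable and so also contains none; hence the filter generated by $\DDD$ decides neither $C$ nor its complement. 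Interleaving collapses only makes matters worse, since the collapse of the interval just below $\kappa$ adds many new unbounded subsets of $\kappa$ which, by homogeneity/genericity, neither contain nor avoid any ground-model set of size $\kappa$. So the ultrafilter of the theorem cannot be ``$\DDD$ as recomputed in $\VG$''; it has to be produced in the extension (for instance as an ultrafilter extending a suitable definable filter, with indecomposability proved for every such extension, or via a generic-embedding argument), and your indecomposability argument is applied to an object that does not exist as described.

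A second, architectural concern: you insist on keeping $\kappa$ regular and turning it into $\aleph_{\omega+1}$, with a \emph{bounded} $\omega$-sequence $\langle \kappa_n \rangle$ below $\kappa$ becoming the $\aleph_n$ and a final collapse of the interval $(\sup_n \kappa_n, \kappa)$. That final collapse is exactly where a normal measure on $\kappa$ loses all control (it is not small forcing relative to $\kappa$, so Levy--Solovay does not help), and it is also the delicate point set-theoretically, since one must collapse everything strictly between the new singular $\aleph_\omega$ and $\kappa$ while preserving both as cardinals. The standard route to indecomposable ultrafilters at the successor of a singular --- and the reason a supercompact rather than a measurable is assumed --- is instead to singularize $\kappa$ into $\aleph_\omega$ and to derive the uniform ultrafilter on $\aleph_{\omega+1} = {\left( {\kappa}^{+} \right)}^{\V}$ from a normal fine measure on ${\[{\kappa}^{+}\]}^{< \kappa}$, whose $\kappa$-completeness is what yields $\aleph_n$-indecomposability for every $n$ once all the new $\aleph_n$ sit below the old $\kappa$. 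Since the paper simply quotes the theorem, the safe course is to do the same; but the specific mechanism you describe should not be presented as the Ben-David--Magidor construction without checking it against their paper.
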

The proof of the next theorem is now just a matter of combining Theorem \ref{thm:bendavidmagidor} with Theorem \ref{thm:general} and Lemma \ref{lem:Pfiltration}.
\begin{Theorem} \label{thm:alephomega+1}
 Assume that there is a supercompact cardinal.
 Then there is a forcing extension in which $\uu({\aleph}_{\omega+1}) < {2}^{{\aleph}_{\omega+1}}$.
\end{Theorem}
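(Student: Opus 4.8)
The plan is to run the machinery of Section \ref{sec:general} with $\kappa = {\aleph}_{\omega+1}$ inside the Ben--David--Magidor model. Let $\V$ be the forcing extension supplied by Theorem \ref{thm:bendavidmagidor}, so that in $\V$ the statement $\GCH$ holds and there is a uniform ultrafilter $\DDD$ on ${\aleph}_{\omega+1}$ which is ${\aleph}_{n}$-indecomposable for every $0 < n < \omega$. Working in $\V$, I would choose the remaining filtration parameters as $\lambda = {\aleph}_{0}$, $\kappa = {\aleph}_{\omega+1}$, and $\mu = {\aleph}_{{\omega}_{1}}$, noting that $\cf(\mu) = \cf({\omega}_{1}) = {\omega}_{1} = {\aleph}_{1}$.

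The next step is to verify clauses (1)--(3) of Definition \ref{def:filtration} for this choice. Clause (1) is immediate from ${\aleph}_{0} < {\aleph}_{1} = \cf(\mu) < {\aleph}_{\omega+1} < {\aleph}_{{\omega}_{1}}$, where $\kappa < \mu$ holds because $\omega + 1 < {\omega}_{1}$. For clause (2), under $\GCH$ every limit cardinal is a strong limit cardinal, so $\mu = {\aleph}_{{\omega}_{1}}$ is a strong limit, while ${\lambda}^{< \lambda} = {\aleph}_{0} = \lambda$. Clause (3) is the one place where the special model is used: since $\cf(\mu) = {\aleph}_{1}$ is among the ${\aleph}_{n}$ for which $\DDD$ was arranged to be indecomposable, $\DDD$ is a uniform $\cf(\mu)$-indecomposable filter on $\kappa$. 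With (1)--(3) established, Lemma \ref{lem:Pfiltration} yields at once that $\PP = \Fn(\mu \times \lambda, 2, \lambda) = \Fn({\aleph}_{{\omega}_{1}} \times {\aleph}_{0}, 2, {\aleph}_{0})$ has a $(\lambda, \kappa, \mu, \DDD)$-filtration. Because $\kappa = {\aleph}_{\omega+1}$ is a successor cardinal, $\cf(\kappa) = \kappa$, so Theorem \ref{thm:general} applies and gives that $\PP$ forces $\uu({\aleph}_{\omega+1}) \leq \mu = {\aleph}_{{\omega}_{1}}$.

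It then remains to show that $\PP$ forces $\mu < {2}^{{\aleph}_{\omega+1}}$, a cardinal-arithmetic computation of the same flavour as the one closing the proof of Theorem \ref{thm:main1}. The poset $\PP$ is a finite-condition Cohen poset of size $\mu$, hence c.c.c.\@ and cofinality- and cardinal-preserving. Let $G$ be $(\V, \PP)$-generic. A standard nice-names count over the $\GCH$ model $\V$ gives ${2}^{{\aleph}_{0}} = {\mu}^{{\aleph}_{0}} = \mu$ in $\V\[G\]$, where ${\mu}^{{\aleph}_{0}} = \mu$ holds because $\cf(\mu) = {\omega}_{1} > \omega$. Consequently, in $\V\[G\]$, ${2}^{{\aleph}_{\omega+1}} = {\left( {2}^{{\aleph}_{0}} \right)}^{{\aleph}_{\omega+1}} = {\mu}^{{\aleph}_{\omega+1}} \geq {\mu}^{\cf(\mu)} > \mu$, where the inequality ${\mu}^{{\aleph}_{\omega+1}} \geq {\mu}^{\cf(\mu)}$ uses ${\aleph}_{\omega+1} \geq {\omega}_{1} = \cf(\mu)$ and the final strict inequality is K\"onig's theorem. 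Combining these facts, $\V\[G\]$ satisfies $\uu({\aleph}_{\omega+1}) \leq \mu < {2}^{{\aleph}_{\omega+1}}$, which is the desired conclusion.

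Since the conceptual content has already been absorbed into Theorem \ref{thm:general}, I expect the only real obstacle to be the bookkeeping of the parameters: one must pick $\cf(\mu)$ to be simultaneously an ${\aleph}_{n}$ with $0 < n < \omega$ (so that the Ben--David--Magidor indecomposability of $\DDD$ can be invoked), strictly below $\kappa = {\aleph}_{\omega+1}$, and equal to the cofinality of a strong limit cardinal $\mu$ lying strictly above $\kappa$. The assignment $\mu = {\aleph}_{{\omega}_{1}}$, $\cf(\mu) = {\aleph}_{1}$, $\lambda = {\aleph}_{0}$ satisfies all three constraints at once; any choice $\cf(\mu) = {\aleph}_{n}$ with $\lambda < {\aleph}_{n}$ and a suitable strong limit $\mu > \kappa$ would serve equally well.
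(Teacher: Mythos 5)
Your proposal is correct and follows essentially the same route as the paper's own proof: the same parameters $\lambda = {\aleph}_{0}$, $\kappa = {\aleph}_{\omega+1}$, $\mu = {\aleph}_{{\omega}_{1}}$ in the Ben--David--Magidor model, the same appeal to Lemma \ref{lem:Pfiltration} and Theorem \ref{thm:general}, and the same closing computation ${2}^{{\aleph}_{\omega+1}} = {\mu}^{{\aleph}_{\omega+1}} \geq {\mu}^{\cf(\mu)} > \mu$. Your verification of clauses (1)--(3) of Definition \ref{def:filtration} is in fact slightly more explicit than the paper's.
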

\begin{proof}
 By Theorem \ref{sec:alephomega+1}, we can pass to a forcing extension $\V'$ in which $\GCH$ holds and there exists a uniform ultrafilter $\DDD$ on ${\aleph}_{\omega+1}$ which is ${\aleph}_{n}$-indecomposable for all $0 < n < \omega$.
 Working in $\V'$, put $\kappa = {\aleph}_{\omega+1}$ and choose $\lambda$ and $\mu$ so that (1)--(3) of Definition \ref{def:filtration} are satisfied.
 In fact, since $\GCH$ holds in $\V'$, we can simply choose $\lambda = {\aleph}_{0}$ and $\mu = {\aleph}_{{\omega}_{1}}$.
 By Lemma \ref{lem:Pfiltration}, $\PP = \Fn(\mu\times\lambda, 2, \lambda)$ has a $(\lambda, \kappa, \mu, \DDD)$-filtration.
 Let $G$ be $(\V', \PP)$-generic.
 By standard arguments, ${2}^{\lambda} = \mu$ in $\V'\[G\]$.
 By Theorem \ref{thm:general} and by the fact the all cofinalities and cardinals are preserved between $\V'$ and $\V'\[G\]$, ${\aleph}_{\omega+1} = \kappa$, $\uu(\kappa) \leq \mu$, and ${2}^{\kappa} = {\left( {2}^{\lambda} \right)}^{\kappa} = {\mu}^{\kappa} \geq {\mu}^{\cf(\mu)} > \mu$ in $\V'\[G\]$.
\end{proof}
The reader will again notice the crucial role played by the fact that $\mu$ is a singular cardinal whose cofinality is smaller than $\kappa$.
Choosing the minimal values for $\lambda$ and $\mu$ that are allowed by the proof, as we have done above, still results in a model where ${2}^{{\aleph}_{\omega+1}} > {\aleph}_{{\omega}_{1}}$.
This is very unlikely to be sharp.
It ought to be possible to produce models where $\uu({\aleph}_{\omega+1}) = {\aleph}_{\omega+2} < {\aleph}_{\omega+3} = {2}^{{\aleph}_{\omega+1}}$.

It is not difficult to combine the proof of Theorem \ref{thm:alephomega+1} with the proof of Theorem \ref{thm:main1} to produce a model where ${2}^{{\aleph}_{0}} = {\aleph}_{\omega+1}$ and $\uu({\aleph}_{\omega+1}) < {2}^{{\aleph}_{\omega+1}}$.
One would then need to choose $\mu$ to be ${\aleph}_{{\omega}_{2}}$ (or bigger).
Details are left to the reader.
\section{Remarks and Questions} \label{sec:q}
As mentioned in Section \ref{sec:intro}, the models constructed in this paper have several features that are likely to be accidental rather than essential.
The first such feature is the use of large cardinals.
\begin{Question} \label{q:1}
 What is the consistency strength of the inequality $\uu({2}^{{\aleph}_{0}}) < {2}^{{2}^{{\aleph}_{0}}}$ or of $\uu({\aleph}_{\omega+1}) < {2}^{{\aleph}_{\omega+1}}$?
\end{Question}
We are not aware that these statements have any large cardinal strength.
The next question is about how large ${2}^{\kappa}$ needs to be for $\uu(\kappa) < {2}^{\kappa}$ to be consistent.
We pose this question in a very weak form below.
\begin{Question} \label{q:2}
 Is the following statement consistent relative to large cardinals: There exists an uncountable regular cardinal $\kappa$ such that $\kappa$ is smaller than the first weakly inaccessible cardinal and $\uu(\kappa) = {\kappa}^{+} < {\kappa}^{++} = {2}^{\kappa}$?
\end{Question}
Finally of course the method in this paper is not applicable to any of the ${\aleph}_{n}$.
\begin{Question} \label{q:3}
 Is it consistent to have $\uu({\aleph}_{n}) < {2}^{{\aleph}_{n}}$, for some $0 < n < \omega$?
\end{Question}
\def\polhk#1{\setbox0=\hbox{#1}{\ooalign{\hidewidth
  \lower1.5ex\hbox{`}\hidewidth\crcr\unhbox0}}}
\providecommand{\bysame}{\leavevmode\hbox to3em{\hrulefill}\thinspace}
\providecommand{\MR}{\relax\ifhmode\unskip\space\fi MR }
\providecommand{\MRhref}[2]{%
  \href{http://www.ams.org/mathscinet-getitem?mr=#1}{#2}
}
\providecommand{\href}[2]{#2}

\end{document}